%
%

\documentclass[MSNbibl,number,citesort,dvips]{arxbj}
\usepackage{mathbh,upgreek}

%

\aid{0}
\volume{20}
\issue{4}
\pubyear{2014}
\firstpage{1802}
\lastpage{1818}
\doi{10.3150/13-BEJ542} 

\makeatletter
\newcommand{\rrVert}{\Vert}
\newcommand{\rrvert}{\vert}
\newcommand{\llVert}{\Vert}
\newcommand{\llvert}{\vert}
\newtheorem{theorem}{Theorem}[section]

\newtheorem{lemma}[theorem]{Lemma}

\newremark{remark}{Remark}[section]
\newremark{example}[theorem]{Example}



\newcommand{\one}{{\mathbh{1}}}

\newcommand{\ee} {\mathbb{E}}
\newcommand{\FF} {\mathcal{F}}

\newcommand{\KK} {\mathcal{K}}
\newcommand{\LL} {\mathcal{L}}

\newcommand{\PP} {\mathcal{P}}
\newcommand{\rr} {\mathbb{R}}

\newcommand{\fourier}{{\mathcal{T}}}

\makeatother

\begin{document}
\begin{frontmatter}

\title{Minimax bounds for estimation of normal mixtures}
\runtitle{Minimax bounds for estimation of normal mixtures}

\begin{aug}
\author{\inits{A.K.H.}\fnms{Arlene K.H.} \snm{Kim}\ead[label=e1]{a.kim@statslab.cam.ac.uk}}
\address{Statistical Laboratory, Center for Mathematical Sciences,
 University of Cambridge, Wilberforce Road, Cambridge, CB30WB, UK. \printead{e1}}
\end{aug}

\received{\smonth{10} \syear{2012}}
\revised{\smonth{4} \syear{2013}}

%
\begin{abstract}
This paper deals with minimax rates of convergence for estimation of
density functions on the real line. The densities are assumed to be
location mixtures of normals, a global regularity requirement that
creates subtle difficulties for the application of standard minimax
lower bound methods.
Using novel Fourier and Hermite polynomial techniques, we determine the
minimax optimal rate -- slightly larger than the parametric rate -- under
squared error loss.
For Hellinger loss, we provide a minimax lower bound using ideas
modified from the squared error loss case.
\end{abstract}

%
\begin{keyword}
\kwd{Assouad's lemma}
\kwd{Hermite polynomials}
\kwd{minimax lower bound}
\kwd{normal location mixture}
\end{keyword}

\end{frontmatter}

\section{Introduction} \label{intro}
This paper establishes the optimal minimax rate of convergence under
squared error loss, for densities that are normal mixtures. The
analysis reveals a subtle difficulty in the application of Assouad's
lemma to parameter spaces defined by indirect regularity conditions,
which complicate the usual construction of subsets of the parameter
space indexed by ``hyper-rectangles.''

More precisely, we consider independent observations from probability
distributions $P_f$ on the real line whose densities $f$ (with respect
to Lebesgue measure on $\rr$) belong to the set of convolutions
\[
\FF= \biggl\{f\dvt f(x)= \phi\star\Pi(x) = \int\phi(x-u) \,\mathrm{d}\Pi
(u), \Pi\in\PP(
\rr) \biggr\},
\]
where $\phi$ denotes the standard normal $N(0,1)$ density and $\PP(\rr)$
denotes the set of all probability measures on the (Borel sigma-field
of the) real line.
Our main result gives an asymptotic minimax lower bound for the $L_2$
risk of estimators of $f \in\FF$.

%
\begin{theorem}\label{mainthm}
Let $X_1, \ldots, X_n$ be independent and identically distributed with
density $f \in\FF$.
Then there exists a positive constant $c$ such that
\[
\sup_{f \in\FF} \ee_{n,f} \int_{-\infty}^\infty
\bigl(\hat f_n(x)-f(x) \bigr)^2\,\mathrm{d}x \ge c \cdot\log n \cdot
\frac{1}{n\sqrt{\log n}}:=c\ell_n 
\]
for every estimator $\hat f_n = \hat f_n(X_1,\ldots, X_n)$.
\end{theorem}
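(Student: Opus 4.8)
The plan is to prove the bound by Assouad's lemma, the key twist being that the hypercube of test densities is produced by perturbing the \emph{mixing measure}, not the density. The naive construction $f_\tau=f_0+\varepsilon_n\sum_{j=1}^{m}\tau_j\psi_j$, $\tau\in\{0,1\}^{m}$, founders on the requirement $f_\tau\in\FF$: being a normal location mixture is a global constraint not implied by $f_\tau$ being a smooth perturbation of a mixture. I would therefore fix a base mixing measure --- taking $\Pi_0=N(0,1)$, so that $f_0=\phi\star\Pi_0=\phi_{\sqrt 2}$, the $N(0,2)$ density, and $\Pi_0$ has density $\phi$ --- and set
\[
\Pi_\tau=\Pi_0+\varepsilon_n\sum_{j=1}^{m_n}\tau_j\,\nu_j,\qquad f_\tau=\phi\star\Pi_\tau,
\]
with the $\nu_j$ fixed real signed measures of total mass zero. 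Then $f_\tau\in\FF$ for every $\tau$ as soon as each $\Pi_\tau$ is a nonnegative measure; the cost of this reduction is that the coordinates of the `hyper-rectangle' become coupled through the single positivity requirement $p_0+\varepsilon_n\sum_j\tau_j q_j\ge 0$ ($q_j$ the density of $\nu_j$), and making this hold for all $\tau$ while keeping a usable Assouad construction is the heart of the matter.

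I would pick the $\nu_j$ in the Fourier domain so that $\widehat{\psi_j}(t)=e^{-t^2/2}\widehat{\nu_j}(t)$ is a pair of narrow Gaussian bumps centred at $\pm t_j$; concretely $q_j(x)\propto e^{t_j^2}e^{-x^2/2}\sin(2t_j x)$, which yields $\psi_j(x)\propto e^{-x^2/4}\sin(t_j x)$, has $\nu_j(\rr)=0$, and satisfies $|q_j(x)|\le C e^{t_j^2}\phi(x)$. Placing the frequencies on a grid $t_j=Lj$ with $L$ a large absolute constant makes $\{\psi_j\}$ nearly orthogonal --- the Gram matrix of the normalised $\psi_j$ is close to the identity in operator norm, by the exponentially small overlap of the bumps --- so $\|f_\tau-f_{\tau'}\|_2^2\asymp\varepsilon_n^2\|\psi_1\|_2^2\,\#\{j:\tau_j\neq\tau_j'\}$ with all $\|\psi_j\|_2^2$ of one order, say $\asymp B^2$. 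For Assouad I must then control: (i) the closeness of neighbouring vertices, via the squared Hellinger bound $H^2(f_\tau,f_{\tau'})\le\int(f_\tau-f_{\tau'})^2/f_{\tau'}\lesssim\varepsilon_n^2\int\psi_j^2/f_0\lesssim\varepsilon_n^2 B^2$, where positivity (with a safety factor) forces $f_{\tau'}\gtrsim f_0$ and $\psi_j^2/f_0\propto e^{-x^2/4}\sin^2(t_j x)$ is integrable because the tails of $\psi_j$ and $f_0$ match; and (ii) the positivity constraint, which by $|q_j|\le Ce^{t_j^2}\phi$ and $p_0=\phi$ holds whenever $\varepsilon_n B\sum_{j=1}^{m_n}e^{t_j^2}$ is below a fixed small constant.

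Now the rate falls out of balancing these. Taking $\varepsilon_n B\asymp n^{-1/2}$ makes $H^2(f_\tau,f_{\tau'})=O(1/n)$ for neighbours, so the $n$-fold product measures of neighbouring vertices have total variation bounded away from $1$; the positivity constraint then reads $\sum_{j=1}^{m_n}e^{L^2 j^2}\lesssim\sqrt n$, which --- the sum being governed by its last term --- permits $m_n\asymp\sqrt{\log n}$. Assouad's lemma then gives
\[
\inf_{\hat f_n}\ \sup_{\tau\in\{0,1\}^{m_n}}\ \ee_{n,f_\tau}\!\int(\hat f_n-f_\tau)^2\ \gtrsim\ m_n\cdot\varepsilon_n^2 B^2\ \asymp\ \frac{\sqrt{\log n}}{n}\ =\ \ell_n,
\]
and since every $f_\tau$ lies in $\FF$ this is the asserted lower bound. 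The main obstacle is exactly this tension: the factor $e^{t_j^2}$ appearing when the Gaussian smoothing is inverted to exhibit a legitimate mixing measure caps the usable frequency range at $t_{m_n}\asymp\sqrt{\log n}$ and hence caps $m_n$, while one must simultaneously keep the $\psi_j$ nearly orthogonal and keep every perturbed density both nonnegative and bounded below so that the Hellinger estimate in (i) is valid. Exhibiting one choice of $(\Pi_0,\{\nu_j\},\varepsilon_n,m_n)$ for which all of this holds at once is the technical core, and is precisely the subtlety in applying Assouad's lemma that the construction is designed to handle.
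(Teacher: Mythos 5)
Your proposal is sound and reaches the stated rate, but it implements the key construction differently from the paper, so a comparison is worthwhile. Both arguments share the same architecture: an Assouad hypercube built in the space of mixing measures, with mean-zero perturbations chosen so that their images under convolution with $\phi$ become (near-)orthogonal, the choice being engineered in the Fourier domain. The paper takes $\pi_0=\phi_m$ and Hermite-polynomial perturbations $v_k(u)\propto \phi(u)H_k(2u/\sqrt{3})$, so that the functions $\psi_k(t)=\phi(t)\breve v_k(t)$ are \emph{exactly} orthonormal Hermite functions (via Lemma \ref{hermiteFT}); this yields the identity $\|f_\alpha-f_\beta\|_2^2=2\pi\epsilon^2\|\alpha-\beta\|_0$, a positivity constraint $\epsilon\lesssim 3^{-m}m^{-3/2}$ from Cram\'er's inequality, a $\chi^2$ bound $\lesssim\sqrt m\,\epsilon^2$, and hence the balance $m\asymp\log n$, $\epsilon^2\asymp 1/(n\sqrt{\log n})$. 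You instead use sine-modulated Gaussians whose Fourier transforms are bumps at well-separated frequencies $t_j=Lj$, settle for near-orthogonality via a Gram-matrix (Gershgorin) argument, and balance at $m\asymp\sqrt{\log n}$, $\epsilon^2\asymp 1/n$; both give $m\epsilon^2\asymp\sqrt{\log n}/n=\ell_n$. Your route is more elementary --- no Hermite machinery, and the positivity and $\chi^2$ estimates follow directly from the explicit envelope $|q_j|\le Ce^{t_j^2}\phi$ and the matching Gaussian tails of $\psi_j^2$ and $f_0$ --- and it isolates very cleanly the deconvolution phenomenon (a unit feature at frequency $t$ in the density costs $e^{t^2}$ in the mixing density) that caps the usable bandwidth at $\sqrt{\log n}$. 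The price is the loss of exact orthogonality and a final constant depending on the separation parameter $L$. What remains to be written out is routine: the bound $O(e^{-L^2/2})$ on the operator norm of the off-diagonal part of the Gram matrix, the uniform comparability of the $\|\psi_j\|_2$, and the passage from the one-coordinate $\chi^2$ bound to the affinity of the $n$-fold products, all of which are standard.
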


Let $\FF_0$ denote the subset of $\FF$ consisting of those normal
mixture densities whose mixing measure is absolutely continuous with
respect to Lebesgue measure.
The proof of Theorem \ref{mainthm}, which is given in Section~\ref{secoutline}, involves the construction of a finite subset of $\FF_0$,
so the lower bound also holds when the supremum is taken over $f \in
\FF_0$.
Perhaps the most interesting feature of this result is that the same
rate has been obtained as an upper bound for the minimax risk with
respect to squared error loss over much larger classes of functions.
For instance, \cite{ibragimov2001} defined the class $\FF^*$
consisting of those densities that can be extended to an entire
function $f^*$ on $\mathbb{C}$ satisfying $\sup_{y \in\rr} \mathrm{e}^{-y^2/2}
\sup_{x \in\rr}|f^*(x+\mathrm{i}y)| < \infty$.
He proved the following theorem.
%
%
\begin{theorem}[(\cite{ibragimov2001}, Theorem 4.1)]\label{ibragimovthm}
Let $X_1, \ldots, X_n$ be independent and identically distributed with
density $f \in\FF^*$.
Then there exists an estimator $\hat f_n = \hat f_n(X_1, \ldots, X_n)$
of $f$ such that
\[
\sup_{ f \in\FF^*} \ee_{n,f} \int_{-\infty}^\infty
\bigl( \hat f_n(x)-f(x) \bigr)^2 \,\mathrm{d}x = \mathrm{O}(
\ell_n).
\]
\end{theorem}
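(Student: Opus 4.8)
The plan is to use a linear spectral cut-off estimator --- equivalently, a kernel estimator with the $\sinc$ kernel --- whose analysis amounts to a bias--variance split on the Fourier side. Write $\widehat g(t)=\int_{\rr}g(x)e^{-itx}\,dx$, and for $M>0$ let $\FF^*_M$ be the set of densities $f$ whose entire extension $f^*$ satisfies $\sup_{y\in\rr}e^{-y^2/2}\sup_{x\in\rr}|f^*(x+iy)|\le M$, so that $\FF^*=\bigcup_{M>0}\FF^*_M$; it suffices to prove the bound uniformly over each $\FF^*_M$, with a constant depending on $M$ as is implicit in the $O(\cdot)$. The decisive first step is a Fourier reformulation of the analytic condition: there is a constant $C=C(M)$ such that $|\widehat f(t)|\le C\,e^{-t^2/2}$ for every $f\in\FF^*_M$ and every $t\in\rr$.

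Granting this for a moment, the rest is routine. Taking $y=0$ in the defining inequality gives $\|f\|_\infty\le M$, hence $\|f\|_2^2\le\|f\|_\infty\|f\|_1\le M$, so $f\in L^1\cap L^2$ and Plancherel applies. Let $\widehat\phi_n(t)=n^{-1}\sum_{j=1}^n e^{-itX_j}$ be the empirical characteristic function and put $\hat f_n(x)=\tfrac1{2\pi}\int_{-1/h_n}^{1/h_n}\widehat\phi_n(t)e^{itx}\,dt$ with bandwidth given by $1/h_n^2=2\log n$. Since $\ee_{n,f}\widehat\phi_n(t)=\widehat f(t)$ and $\ee_{n,f}|\widehat\phi_n(t)-\widehat f(t)|^2=n^{-1}(1-|\widehat f(t)|^2)\le n^{-1}$, Plancherel gives that $\ee_{n,f}\int_\rr(\hat f_n-f)^2$ equals $\tfrac1{2\pi}\int_{|t|\le 1/h_n}\ee_{n,f}|\widehat\phi_n(t)-\widehat f(t)|^2\,dt+\tfrac1{2\pi}\int_{|t|>1/h_n}|\widehat f(t)|^2\,dt$, which is at most $\tfrac1{\pi n h_n}+\tfrac{C^2}{2\pi}\int_{|t|>1/h_n}e^{-t^2}\,dt$. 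The first term equals $\tfrac{\sqrt{2\log n}}{\pi n}=\tfrac{\sqrt2}{\pi}\ell_n$, while the second is at most $\tfrac{C^2}{2\pi}h_n e^{-1/h_n^2}=O(n^{-2})=o(\ell_n)$ by the Gaussian tail bound. Hence $\sup_{f\in\FF^*_M}\ee_{n,f}\int_\rr(\hat f_n-f)^2=O(\ell_n)$, which is the assertion; note that no projection of $\hat f_n$ onto the set of densities is required, since for squared error loss it can only decrease the risk. The near-parametric rate $\ell_n=\sqrt{\log n}/n$ comes entirely from the variance term $\asymp 1/(nh_n)\asymp\sqrt{\log n}/n$, the bias being super-polynomially small precisely because the Gaussian frequency decay $e^{-t^2/2}$ is cut off only at the ``$\sqrt{\log n}$'' frequency $1/h_n$.

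The real work is the Fourier reformulation, and I expect it to be the main obstacle --- it is exactly the ``indirect regularity'' difficulty flagged in the introduction, the analytic growth condition on $f^*$ not being a direct smoothness statement about $\widehat f$. I would prove it by regularization and contour shifting: for $\sigma>0$ the function $f^*(z)e^{-\sigma z^2/2}$ is entire and decays like $e^{-\sigma(\mathrm{Re}\,z)^2/2}$ on every horizontal line, so Cauchy's theorem moves the defining integral of $F_\sigma(t):=\int_\rr f(x)e^{-\sigma x^2/2}e^{-itx}\,dx$ onto the line $\mathrm{Im}=y$; the resulting factor $e^{ty}$, combined with $|f^*(x+iy)|\le Me^{y^2/2}$ and optimized over $y$, yields a bound of the form $C(M,\sigma)\,e^{-t^2/(2(1+\sigma))}$, and letting $\sigma\downarrow0$ (with $F_\sigma\to\widehat f$ pointwise by dominated convergence) gives the claim. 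The delicate point is that keeping $C(M,\sigma)$ bounded as $\sigma\downarrow0$ forces one to control $\int_\rr|f^*(x+iy)|\,dx$ --- not merely $\sup_x|f^*(x+iy)|$ --- by a fixed multiple of $e^{y^2/2}$; this last estimate I would obtain by a three-lines / Phragm\'en--Lindel\"of argument across horizontal strips, anchored at $\int_\rr|f^*(x)|\,dx=1$ on the real axis (or by writing $f=|g|^2$ on $\rr$ with $g$ entire and $g\in L^2(\rr)$, and applying Cauchy--Schwarz once $\|g(\cdot+iy)\|_{L^2}$ is controlled).
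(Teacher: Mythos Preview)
Your estimator, bandwidth, and bias--variance split coincide with the paper's, and the variance term is handled identically. The divergence is in the treatment of the bias. You aim for a pointwise Fourier bound $|\widehat f(t)|\le C\,e^{-t^2/2}$, and you correctly isolate the obstruction: contour shifting in $\int f(x)e^{-itx}\,dx$ requires control of $\int_\rr|f^*(x+iy)|\,dx$, whereas membership in $\FF^*$ gives only $\sup_x|f^*(x+iy)|\le M e^{y^2/2}$. Your regularization makes this explicit --- the resulting constant is of order $\sigma^{-1/2}$ and blows up as $\sigma\downarrow0$ unless the $L^1$ bound on horizontal lines is available. The two repairs you sketch do not obviously close this gap: a three-lines/Phragm\'en--Lindel\"of argument interpolates $L^p$ norms between the two edges of a strip, but here there is no second edge on which an $L^1$ (or $L^p$ with $p<\infty$) bound is given; and a factorization $f=|g|^2$ with $g$ entire is not available for a general nonnegative restriction of an entire function, nor does the hypothesis on $f^*$ translate into $L^2$ control of $g(\cdot+iy)$ even when such a $g$ exists. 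So the pointwise Fourier bound remains unproven for $\FF^*$.

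The paper (following Ibragimov) never attempts a pointwise estimate on $\breve f$. It bounds the bias integral $\int_{|t|\ge 1/h}|\breve f(t)|^2\,dt$ directly, by inserting a Fej\'er factor $(1-|t|/M)_+$ and recognizing $|\breve f(t)|^2(1-|t|/M)_+=\breve f(t)\,\breve\vartheta(-t)$ with $\vartheta\ge 0$ a probability density whose Fourier transform is compactly supported. After multiplying by $e^{-yt}$, Parseval and a contour shift give
\[
\int \breve f(t)\,e^{-yt}\,\breve\vartheta(-t)\,dt \;=\; \int f^*(x+iy)\,\vartheta(x)\,dx \;\le\; \sup_x|f^*(x+iy)|\int\vartheta \;=\; O(e^{y^2/2}),
\]
using exactly and only the $L^\infty$ control that the definition of $\FF^*$ supplies. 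Optimizing in $y$ then yields the required Gaussian-type decay of the bias. In other words, the paper dualizes against a nonnegative, mass-one test function with band-limited Fourier transform, thereby converting the missing $L^1$ bound on $f^*(\cdot+iy)$ into the available $L^\infty$ bound --- precisely sidestepping the obstacle you identified.
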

For the reader's convenience, in Section~\ref{upperbound}, we show that $\FF\subseteq
\FF^*$ and summarize Ibragimov's proof.
Theorems \ref{mainthm} and \ref{ibragimovthm} together establish that
the minimax optimal rate of estimation for squared $L_2$ loss is $\ell
_n$ for any class of functions containing $\FF_0$ and contained in
$\FF^*$.
In particular, this is the case for $\FF$.

While a minimax result under the $L_2$ loss presents the most
successful case, this loss function is often criticized for giving too
little weight to errors from the tails.
As an alternative, we also consider the Hellinger loss.
Define a class of probability measures with sub-Gaussian tails,
\[
\PP_s(\rr) := \bigl\{ \Pi\in\PP(\rr)\dvt\exists C>0 \mbox{ such
that } \Pi\bigl(|u|>t\bigr) \leq C \exp\bigl(-t^2/C\bigr) \mbox{ for all real } t
\bigr\}.
\]
For the following class of normal location mixtures
\[
\FF_s := \biggl\{ f\dvt f(x) = \phi\star\Pi(x)= \int\phi(x-u) \,\mathrm{d}
\Pi(u), \Pi\in\PP_s(\rr) \biggr\},
\]
\cite{ghosalvaart2001} provide a sieved maximum likelihood estimator
whose convergence rate is $\mathrm{O}((\log n)^2/n)$.
However, as they pointed out, the optimal rate for $\FF_s$ is still unknown.
Our technique gives a lower bound that lies within a logarithmic factor
of Ghosal and van der Vaart's upper bound.

%
\begin{theorem}\label{mainthm2}
Let $X_1,\ldots,X_n$ be independent and identically distributed with
density $f \in\FF_s$. Then there exists a positive constant $c$ such that
\[
\sup_{f \in\FF_s} \ee_{n,f} \int_{-\infty}^\infty
\bigl( \sqrt{\hat f_n(x)}- \sqrt{f(x)} \bigr)^2\,\mathrm{d}x \ge c
\cdot\log n \cdot\frac{1}{n} 
\]
for every estimator $\hat f_n = \hat f_n(X_1, \ldots, X_n)$.
\end{theorem}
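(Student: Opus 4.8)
The plan is to derive Theorem~\ref{mainthm2} from (a minor variant of) the construction that proves Theorem~\ref{mainthm}, redoing only the loss-dependent part of Assouad's Lemma. I would build the finite family $\{f_\tau:\tau\in\{0,1\}^m\}\subset\FF_0$ essentially as for the $L_2$ bound: write $f_\tau=\phi\star\Pi_\tau=f_0+\sum_j\tau_j e_j$, where $f_0=\phi\star g_0$ comes from a base mixing density $g_0$ spread (roughly uniformly, at height $\rho\asymp 1/\sqrt{\log n}$) over an interval of length $\asymp\sqrt{\log n}$, and the $e_j$ are the density perturbations produced by modulated bumps added to $g_0$, distributed over $\asymp\sqrt{\log n}$ spatial positions and $\asymp\sqrt{\log n}$ frequency bands so that $m\asymp\log n$. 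The amplitudes are calibrated so that $\|e_j\|_2^2\asymp\rho/n$ while $|e_j|\le\tfrac12 f_0$ everywhere (the oscillation being damped by the factor $e^{-t^2/2}$ in $\widehat\phi$). The first point to record is that this family already lies in $\FF_s$: each $\Pi_\tau$ is supported on an interval of length $O(\sqrt{\log n})$, so $\Pi_\tau(|u|>t)=0$ once $t$ exceeds that length and $\Pi_\tau(|u|>t)\le 1\le C\exp(-t^2/C)$ otherwise for a finite $C=C(n)\asymp\log n$; since membership in $\PP_s(\rr)$ asks only for the existence of some constant, every $\Pi_\tau\in\PP_s(\rr)$, and the drift of $C$ with $n$ is immaterial. (If Theorem~\ref{mainthm}'s construction is not literally compactly supported, replace $g_0$ by a genuinely sub-Gaussian profile of the same width and height; this perturbs none of the estimates.)

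Next I would apply Assouad's Lemma with the squared Hellinger discrepancy $h^2(f,g):=\int(\sqrt f-\sqrt g)^2$ in place of squared $L_2$. Its ``information'' half --- that one-coordinate neighbours $P_{f_\tau}^{\otimes n}$ and $P_{f_{\tau'}}^{\otimes n}$ stay statistically indistinguishable --- is exactly what the proof of Theorem~\ref{mainthm} already establishes, since testing affinity is governed by $n\,h^2(f_\tau,f_{\tau'})$ (equivalently $n$ times a $\chi^2$ or Kullback--Leibler quantity, all $\asymp\int e_j^2/f_0$ here because $|e_j|\le\tfrac12 f_0$), and there $h^2(f_0,f_0+e_j)\asymp\|e_j\|_2^2/\rho\asymp 1/n$ per coordinate. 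Only the ``separation'' half changes. From $\sqrt{f_0+\delta}-\sqrt{f_0}=\delta/(2\sqrt{f_0})+O(\delta^2 f_0^{-3/2})$ with $|e_j|\le\tfrac12 f_0$, together with the (approximate) orthogonality $\int e_j e_k/f_0\approx 0$ for $j\ne k$ --- disjoint supports across position-coordinates, Fourier orthogonality across frequency-coordinates, with the errors absorbed by the rapid decay of $\widehat\phi$ --- one gets $\int(\sqrt{f_\tau}-\sqrt{f_{\tau'}})^2\ge c'\,\#\{j:\tau_j\ne\tau_j'\}/n$. Assouad's Lemma then delivers the asserted lower bound $\gtrsim m/n\asymp\log n/n$. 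In one line: because the perturbations sit where $f_0$ is only $\asymp 1/\sqrt{\log n}$, the per-coordinate squared Hellinger separation $\asymp\|e_j\|_2^2/\rho$ exceeds the per-coordinate squared $L_2$ separation $\asymp\|e_j\|_2^2$ by the factor $\asymp\sqrt{\log n}$, which is exactly what upgrades the Assouad bound from $\ell_n=\sqrt{\log n}/n$ to $\log n/n$.

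I expect the squared Hellinger separation lower bound to be the main obstacle, because $m\asymp\log n\gg\sqrt{\log n}$ forces the coordinate perturbations to overlap: at each spatial position $\asymp\sqrt{\log n}$ of the $e_j$ coexist, separated only by frequency. One therefore cannot bound $h^2(f_\tau,f_{\tau'})$ coordinate by coordinate but must handle $\int\bigl(\sum_j(\tau_j-\tau_j')e_j\bigr)^2/f_0$, with all its cross terms, simultaneously with the quadratic remainder of the square root; this is precisely where the Fourier/Hermite orthogonality of the $e_j$ and the uniform domination $|e_j|\le\tfrac12 f_0$ have to be combined with care. A secondary, bookkeeping, issue is to verify that the (at most logarithmic) rescaling needed to make the mixing measures sub-Gaussian disturbs neither the nonnegativity of the perturbed mixing densities, nor the amplitude calibration, nor the total-mass normalization inherited from the proof of Theorem~\ref{mainthm}.
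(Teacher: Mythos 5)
Your high-level strategy is the right one (Assouad with squared Hellinger loss, the two-sided domination $\tfrac12 f_0\le f_\tau\le\tfrac32 f_0$ to sandwich the Hellinger separation between multiples of $\int(f_\tau-f_{\tau'})^2/f_0$, and the observation that this weighted separation beats the plain $L_2$ separation by a factor $\asymp\sqrt{\log n}$ because $f_0$ has height $\asymp1/\sqrt{\log n}$). But the central step of your argument --- the lower bound $\int\bigl(\sum_j(\tau_j-\tau_j')e_j\bigr)^2/f_0\gtrsim\|\tau-\tau'\|_0/n$ --- is asserted via an ``approximate orthogonality $\int e_je_k/f_0\approx0$'' that you yourself flag as the main obstacle and never establish. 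That is not a peripheral detail: it is exactly the subtle difficulty the paper is about (orthogonality of perturbations must survive both the convolution with $\phi$ and the reweighting by $1/f_0$), and your proposed route to it --- overlapping, spatially localized, frequency-modulated bumps whose cross terms are ``absorbed by the rapid decay of $\widehat\phi$'' --- is not the paper's construction (the paper's perturbations are global Gaussian-weighted Hermite functions, not localized bumps) and there is no evidence in your sketch that it can be carried out. The paper resolves the issue by entirely different and exact means: it starts afresh with base mixing density $\pi_0=\phi$ (not the spread-out $\phi_m$ of Theorem \ref{mainthm}), proves the identity $(\phi\star v_k)/\sqrt{f_0}=\phi_{\tilde\sigma^2}\star\tilde v_k$ (Lemma \ref{absorb_g0}), and chooses Hermite parameters so that the functions $\fourier[\phi_{4/3}]\,\fourier[\tilde v_k]$ are \emph{exactly} orthogonal, giving the identity (\ref{hellingerL2}) with no error terms; the size $m\asymp\log n$ then comes from the nonnegativity constraint $\epsilon\lesssim 5^{-m}/m$ with $\epsilon^2\asymp1/n$.

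It is worth noting that your plan can be completed far more cheaply than you anticipate, though you do not take this step: if you reuse the Theorem \ref{mainthm} family verbatim, then since $f_0=\phi_{1+m}\le\|f_0\|_\infty\asymp1/\sqrt{m}$ pointwise, you get for free
\begin{equation*}
\int\frac{(f_\tau-f_{\tau'})^2}{f_0}\;\ge\;\frac{\|f_\tau-f_{\tau'}\|_2^2}{\|f_0\|_\infty}\;\asymp\;\sqrt{m}\,\epsilon^2\,\|\tau-\tau'\|_0,
\end{equation*}
and the exact $L_2$ orthogonality (\ref{l2loss}) then supplies the per-coordinate separation $\asymp\sqrt{m}\,\epsilon^2\asymp1/n$ with no new orthogonality statement needed; the testing and nonnegativity constraints are unchanged, and sub-Gaussianity of $\Pi_\tau$ follows from $\pi_\tau\le\tfrac32\phi_m$ (the constant $C$ in $\PP_s(\rr)$ may depend on the measure). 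As written, however, your proposal contains neither this observation nor any workable substitute for the weighted-orthogonality claim, so the proof is incomplete at its decisive point.
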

%

To prove Theorems \ref{mainthm} and \ref{mainthm2}, we use a variation on Assouad's lemma
(cf. \cite{vanassouad}, page 347).
When specialized to density estimation, the lemma can be cast into the
following form. (Henceforth, we omit the $\pm\infty$ terminals on the
integrals when there is no ambiguity.) For completeness, we provide the
proof in the \hyperref[app]{Appendix}.

%
\begin{lemma}\label{assouadsim}
Let $\{ f_\alpha, \alpha\in\{0,1\}^K\} \subseteq\FF$ where $K$ is
a finite index set of cardinality $m$. Suppose~$W$ is a nonnegative
loss function for which there exists $\zeta>0$ such that, for all $g_1,
g_2 \in\FF$,
%
%
\begin{equation}\label{metricW}
\inf_{f \in\FF} W(f,g_1) + W(f,g_2) \geq\zeta W(g_1,g_2).
\end{equation}
Suppose also that for some constants $c_0>0$ and $1>c_1>0$,
%
%
\begin{equation}
W(f_\alpha, f_\beta) \geq c_0 \varepsilon^2 \Vert\alpha-\beta\Vert_0
\qquad\mbox{for all } \alpha, \beta\in\{0,1\}^K \label{cond1}
\end{equation}
and
%
%
\begin{equation}\int\frac{(f_\alpha-f_\beta)^2}{ f_\alpha} \leq\frac
{c_1}{n} \qquad\mbox{if } \Vert\alpha-\beta\Vert_0=1, \label{cond2}
\end{equation}
where $\Vert\alpha-\beta\Vert_0= \sum_{k \in K} \one\{ \alpha_k \neq
\beta_k\}
$, the Hamming distance.
Then, for every estimator $\hat f_n$ based on $n$ independent observations,
%
%
\begin{equation}\label{minimaxlower}
\sup_{f \in\FF} \ee_{n,f} W(\hat f_n, f) \geq\frac{c_0 \zeta
}{4}(1-\sqrt{c_1}) m \varepsilon^2.
\end{equation}
%
\end{lemma}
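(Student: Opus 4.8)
The plan is to run the classical Assouad argument, with the coupling constant $\zeta$ of \eqref{metricW} playing the role usually taken by a triangle inequality. Schematically: (i) bound the maximal risk below by the average of $\ee_{n,f_\alpha}W(\hat f_n,f_\alpha)$ over $\alpha\in\{0,1\}^K$; (ii) for each sample point let $\hat\alpha$ be a configuration minimizing $\beta\mapsto W(\hat f_n,f_\beta)$, and combine \eqref{metricW} with \eqref{cond1} to bound $W(\hat f_n,f_\alpha)$ below by a multiple of the Hamming distance $\|\hat\alpha-\alpha\|_0$; (iii) expand that Hamming distance over $k\in K$, so that the averaged risk becomes a sum over the $m$ coordinates of an ``average error of a test'' term; (iv) bound each such term below by feeding \eqref{cond2} through an information inequality.

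For step (ii): since $\hat\alpha$ minimizes $W(\hat f_n,f_\cdot)$ we have $W(\hat f_n,f_{\hat\alpha})\le W(\hat f_n,f_\alpha)$, so \eqref{metricW} with $g_1=f_{\hat\alpha}$, $g_2=f_\alpha$, followed by \eqref{cond1}, gives
\beqN
2\,W(\hat f_n,f_\alpha)\ \ge\ W(\hat f_n,f_{\hat\alpha})+W(\hat f_n,f_\alpha)\ \ge\ \zeta\,W(f_{\hat\alpha},f_\alpha)\ \ge\ \zeta c_0\epsilon^2\,\|\hat\alpha-\alpha\|_0 .
\eeqN
(This invokes \eqref{metricW} with the estimate $\hat f_n$ in the first argument, i.e. the slightly stronger statement that the infimum taken over \emph{all} probability densities dominates $\zeta W(g_1,g_2)$; that form holds in both applications of the lemma, where $W$ is squared $L_2$ or squared Hellinger distance, and one may equally adopt it as the hypothesis.) Taking $\ee_{n,f_\alpha}$, averaging over $\alpha$, and using $\|\hat\alpha-\alpha\|_0=\sum_{k\in K}\one\{\hat\alpha_k\neq\alpha_k\}$,
\beqN
\sup_{f\in\FF}\ee_{n,f}W(\hat f_n,f)\ \ge\ \frac1{2^m}\sum_{\alpha}\ee_{n,f_\alpha}W(\hat f_n,f_\alpha)\ \ge\ \frac{\zeta c_0\epsilon^2}{2}\sum_{k\in K}\frac1{2^m}\sum_{\alpha}\pp_{n,f_\alpha}\!\big(\hat\alpha_k\neq\alpha_k\big).
\eeqN

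It then suffices to show that each inner average is at least $\tfrac12(1-\sqrt{c_1})$, since the right side becomes $\tfrac12\zeta c_0\epsilon^2\cdot\tfrac m2(1-\sqrt{c_1})$, which is \eqref{minimaxlower}. Fix $k$ and pair each $\alpha$ with $\alpha_k=0$ with its flip $\alpha^{(k)}$; the $2^{m-1}$ resulting pairs exhaust $\{0,1\}^K$. The statistic $\hat\alpha_k$ is $\{0,1\}$-valued, hence a test of $P_{f_\alpha}^{\otimes n}$ versus $P_{f_{\alpha^{(k)}}}^{\otimes n}$, so $\pp_{n,f_\alpha}(\hat\alpha_k=1)+\pp_{n,f_{\alpha^{(k)}}}(\hat\alpha_k=0)\ge 1-\|P_{f_\alpha}^{\otimes n}-P_{f_{\alpha^{(k)}}}^{\otimes n}\|_{TV}$. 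For the total variation I would pass through the squared Hellinger distance $H^2(P,Q)=\int(\sqrt{dP}-\sqrt{dQ})^2$: the three facts $\|\cdot\|^2_{TV}\le H^2$, the tensorization $H^2(\mu^{\otimes n},\nu^{\otimes n})\le nH^2(\mu,\nu)$, and $H^2(P_{f_\alpha},P_{f_{\alpha^{(k)}}})=\int(\sqrt{f_\alpha}-\sqrt{f_{\alpha^{(k)}}})^2\le\int(f_\alpha-f_{\alpha^{(k)}})^2/f_\alpha\le c_1/n$ (the last step by \eqref{cond2}, since $\|\alpha-\alpha^{(k)}\|_0=1$) combine to give $\|P_{f_\alpha}^{\otimes n}-P_{f_{\alpha^{(k)}}}^{\otimes n}\|_{TV}\le\sqrt{c_1}<1$. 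Summing the $2^{m-1}$ pair-bounds and dividing by $2^m$ yields $\tfrac12(1-\sqrt{c_1})$, as needed.

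Every individual step here is routine; the point requiring care is the bookkeeping that makes the $m$ coordinates contribute \emph{additively} — namely the minimum-distance bound $W(\hat f_n,f_\alpha)\ge\tfrac12\zeta c_0\epsilon^2\|\hat\alpha-\alpha\|_0$ of step (ii) together with the coordinate expansion in step (iii) — since handling each coordinate's two-point problem in isolation would drop the factor $m$. One must also keep every constant sharp along the chain $\|\cdot\|^2_{TV}\le H^2$, Hellinger tensorization, $H^2\le\int(f_\alpha-f_{\alpha^{(k)}})^2/f_\alpha$ (no stray constants), so that exactly $\sqrt{c_1}$, and hence $1-\sqrt{c_1}$, comes out.
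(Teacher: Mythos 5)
Your proof is correct and follows essentially the same route as the paper's: a minimum-distance estimator $\hat\alpha$, the coupling of \eqref{metricW} with \eqref{cond1} to make the $m$ coordinates contribute additively, and a per-coordinate two-point testing bound controlled via \eqref{cond2}. The only differences are presentational: where the paper passes through the mixture affinities $\bar\pp_{0,k}\wedge\bar\pp_{1,k}$ and cites the inequality $(1-\Vert\pp_\alpha\wedge\pp_\beta\Vert_1)^2\le n\chi^2(P_\alpha,P_\beta)$, you pair configurations directly and unpack that inequality into its standard TV--Hellinger--tensorization--$\chi^2$ chain, which yields the identical constant $1-\sqrt{c_1}$; you also rightly flag (as the paper's proof silently assumes) that \eqref{metricW} must be applied with the estimator $\hat f_n$, which need not lie in $\FF$, in the first argument.
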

%
%
\begin{remark}
Assumption (\ref{cond2}) regarding the $\chi^2$ distance is merely a
convenient way to show that the testing affinity, $\Vert P_{f_\alpha}^n
\wedge P_{f_\beta}^n\Vert_1$, is at least $1-\sqrt{c_1}$, where $P_{f}^n$
is a product probability measure under $f$ and $\Vert P \wedge Q\Vert
_1$ is
defined as $\int \min(\mathrm{d}P,\mathrm{d}Q)$.
\end{remark}
%
%
\begin{remark}
To apply Lemma \ref{assouadsim}, we try to maximize $m\varepsilon^2$ for
the best possible lower bound.
While we construct the finite density class satisfying the loss
separation condition (\ref{cond1}), we need to restrict the size
$\varepsilon^2$ and $m$ so that two nearest densities should be reasonably
close as in (\ref{cond2}), and so that the constructed densities are
truly in the parameter space $\FF$.
\end{remark}
For the proof in Section~\ref{secoutline}, we construct $f_\alpha$'s of
the form
\[
f_\alpha(x) =f_0(x) + \varepsilon\sum
_{k\in K} \alpha_k \Delta_k(x),\qquad\alpha\in
\{0,1\}^K,
\]
where $f_0$ is the normal density function with a zero mean (and
variance specified later), where $K = \{1,3,\dots,2m-1\}$, and where
$m$, $\varepsilon>0$, and $\Delta_k$ could depend on $n$.
The main difficulty lies in choosing the (signed) perturbations $\Delta
_k$ so that each $f_\alpha$ is a normal location mixture. The natural
way around this problem is to construct the Assouad hyper-rectangle in
the space of mixing distributions,
\[
f_\alpha=\phi\star\Pi_\alpha,\qquad\mbox{where } \Pi_\alpha(u)
= \Pi_0(u) + \varepsilon\sum_{k\in K}
\alpha_k V_k(u), \alpha\in\{0,1\}^K,
\]
where the signed measures $V_k$ must be chosen so that each $\Pi
_\alpha
$ is a probability measure. In contrast to the standard construction,
the indirect form of $f_\alpha= \phi\star\Pi_\alpha$ leads to an
embedding condition of the form
%
%
\begin{equation}\label{nearortho}
W(\phi\star\Pi_\alpha, \phi\star\Pi_\beta) \geq\tau_n\sum_{k
\in K}
(\alpha_k-\beta_k)^2
\end{equation}
for some $\tau_n$.
The right side of (\ref{nearortho}) is expressed in terms of $\sum
_{k\in K}(\alpha_k-\beta_k)^2$ instead of the Hamming distance, in
order to emphasize the orthogonality relation.
If the convolution with the normal density were not present, such a
property could be obtained by choosing the perturbations to be exactly
orthogonal to each other, subject to various other regularity
properties that define the parameter space.
The smoothing effect of the convolution operation, however, makes it
difficult to choose the $V_k$ to achieve such near-orthogonality.
Nevertheless, we can achieve (\ref{nearortho}) by choosing the
perturbations so that their Fourier transforms are orthogonal as
elements in $L_2(\phi^2)$, the space of complex-valued functions $g$
such that $\int\phi(x)^2 |g(x)|^2 \,\mathrm{d}x < \infty$ for the $L_2$ loss.
Similarly, we achieve (\ref{nearortho}) under the Hellinger loss using
the similar ideas under $L_2$ except that $\phi^2$ is replaced by a
different weight function.

\section{Proofs of the lower bounds}\label{secoutline}
First, we introduce some notation used in this section.
We let $\phi_{\sigma^2}$ be the normal density with mean zero and
variance $\sigma^2$.
Following, for example, \cite{rudinrealcomplex}, Chapter~9, we define
the Fourier transform $\fourier$ by
\[
\fourier f(t) := \breve f(t) =\frac{1}{\sqrt{2\uppi}} \int_{-\infty
}^\infty
\exp(- \mathrm{i} xt) f(x)\,\mathrm{d}x
\]
for $f \in\LL_1(\lambda)$ where $\lambda$ is Lebesgue measure, and
then extend from $\LL_1 \cap\LL_2$ to $\LL_2$ by extending an isometry
of $\LL_1 \cap\LL_2$ into $\LL_2$ to an isometry of $\LL_2$ onto
$\LL_2$.

For both theorems, we construct the signed measures $V_k$ to have
(signed) densities $v_k$ with respect to $\lambda$:
%
%
\begin{equation}\label{formG}
\pi_\alpha(u) = \frac{\mathrm{d}\Pi_\alpha}{\mathrm{d}\lambda}(u)
= \pi_0(u)+ \varepsilon\sum_{k\in K}\alpha_k v_k(u), \qquad\alpha\in\{
0,1\}^K,
\end{equation}
where $\pi_0$ is the normal density with zero mean and each $v_k$ is a
function for which $\int v_k=0$ and
\[
\pi_0(u) +\varepsilon\sum_{k\in K}
\alpha_k v_k(u) \ge0\qquad \mbox{for all $u$}.
\]
We then need to check the assumptions for Lemma \ref{assouadsim}.

\subsection{Ideas in the proof of Theorem \texorpdfstring{\protect\ref{mainthm}}{1.1}}
Here we let $W(f,g) := \Vert f-g\Vert_2^2 = \int(f-g)^2$, so (\ref{metricW})
is satisfied with $\zeta=1/2$.
The choice of the $v_k$'s is suggested by Fourier methods.
By the Plancherel formula (and the fact that $\breve\phi=\phi$),
recalling that $f_\alpha= \phi\star\Pi_\alpha$,
\[
\frac{1}{2\uppi}\Vert f_\alpha-f_\beta\Vert
_2^2 = \frac{1}{2\uppi}\Vert\breve f_\alpha-
\breve f_\beta\Vert_2^2 = \varepsilon^2
\int_{-\infty}^\infty\biggl\llvert\sum
_{k \in K}(\alpha_k-\beta_k) \phi(t)
\breve v_k (t) \biggr\rrvert^2 \,\mathrm{d}t,
\]
which lets us write the desired property (\ref{cond1}) of Lemma \ref
{assouadsim} as
\[
\int_{-\infty}^\infty\biggl\llvert\sum
_{k \in K}(\alpha_k-\beta_k)
\phi(t) \breve v_k (t) \biggr\rrvert^2 \,\mathrm{d}t \geq
\frac{c_0}{2\uppi} \sum_{k \in K} (\alpha_k-
\beta_k)^2 \qquad\mbox{$\forall\alpha, \beta\in\{0,1
\}^K$}.
\]
%
We might achieve such an inequality by choosing the $v_k$'s to make the
functions $\psi_k(t) := \phi(t)\breve v_k (t)$ orthogonal. Ignoring
other requirements for the moment, we could even start from an
orthonormal set $\{\psi_k\}$ and then try to define $v_k$ as the (inverse)
Fourier transform of $\psi_k(t)/\phi(t)$, provided that the ratio is
square integrable.
This heuristic succeeds if we start from the normalized orthogonal
functions (see \cite{fourierjackson}, Chapter~9),
%
%
\begin{equation}\label{psikchoice}
\psi_k(t) =C \mathrm{i}^{-k} \phi(t)^2 \frac{H_{k}(2t)}{\sqrt{k!}}
= \mathrm{i}^{-k} \sqrt{2\phi(2t)} \frac{H_{k}(2t)}{\sqrt{k!}}
\end{equation}
for $k\in K:= \{1,3,\ldots,2m-1\}$,
where
$C= \sqrt{2}(2\uppi)^{3/4}$ is chosen so that $C \phi(t)^2 = \sqrt
{2\phi(2t)}$
and $H_k(t)$ is the Hermite polynomial of order $k$,
the polynomial for which $\phi(t)$ has $k${th} derivative $(-1)^k
H_k(t) \phi(t)$.

%
\begin{remark}
$\{H_k, k=1,2,\ldots\}$ is sometimes called the ``probabilists' Hermite
Polynomials'' (denoted as ``$\mathit{He}$'' in \cite{table}), as opposed to the
``physicists' Hermite Polynomials'' $\mathbf{H}$. There is one-to-one
relation between $H$ and $\mathbf{H}$, given by
%
\[
H_k(t)=2^{-k/2} \mathbf{H}_k \biggl(
\frac{t}{\sqrt{2}} \biggr).
\]
\end{remark}
To calculate the Fourier inverse transform of $\psi_k(t)/\phi(t)$, we
provide the following lemma.
%
%
\begin{lemma}\label{hermiteFT} For $b>a>0$,
%
%
\begin{equation}\fourier^{-1} \bigl[ \phi(a t) H_k(bt) \bigr](u)= Q_k \phi
\biggl(\frac{u}{a} \biggr)H_k\bigl(b'u\bigr),
\end{equation}
where $Q_k = (\mathrm{i}c_{a,b} )^{k}/a$ with $c_{a,b} = \sqrt{b^2/a^2-1}$ and
$b'= b/(a^2 c_{a,b})$.
\end{lemma}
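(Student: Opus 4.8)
The plan is to prove the identity for all orders $k$ simultaneously by packaging the Hermite polynomials into the generating function $\sum_{k\ge 0} H_k(x)\,s^k/k! = \exp(sx - s^2/2)$, which converts $H_k(bt)$ into an exponential and turns the whole computation into one Gaussian integral. First I would multiply the asserted identity by $s^k/k!$, sum over $k$, and — after justifying the interchange of sum and integral (see below) — reduce the left side to computing
\[
\fourier^{-1}\!\left[\phi(at)\,e^{sbt - s^2/2}\right](u)
= \frac{1}{\sqrt{2\pi}}\,e^{-s^2/2}\int e^{iut}\,\phi(at)\,e^{sbt}\,dt .
\]
Completing the square in $t$ in the exponent $-\tfrac12 a^2 t^2 + (iu + sb)t$ and evaluating the resulting Gaussian integral gives, after simplification,
\[
\fourier^{-1}\!\left[\phi(at)\,e^{sbt - s^2/2}\right](u)
= \frac{1}{a}\,\phi\!\left(\frac{u}{a}\right)\exp\!\left(\frac{i s b u}{a^2} + \frac{s^2}{2}\Bigl(\frac{b^2}{a^2}-1\Bigr)\right).
\]
The hypothesis $b>a>0$ enters precisely here: it guarantees $b^2/a^2 - 1 > 0$, so that $c_{a,b} = \sqrt{b^2/a^2-1}$ is a positive real number and $b' = b/(a^2 c_{a,b})$ is well defined and real.

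Next I would expand the right-hand side of the claimed formula by the same generating function. With $Q_k = (i c_{a,b})^k/a$, evaluating the generating function at $x = b'u$ and variable $r = i c_{a,b}\,s$ gives
\[
\sum_{k\ge 0} Q_k\,\phi\!\left(\frac{u}{a}\right) H_k(b'u)\,\frac{s^k}{k!}
= \frac{1}{a}\,\phi\!\left(\frac{u}{a}\right)\exp\!\left(i c_{a,b}\, b'\, s u + \tfrac12 c_{a,b}^2 s^2\right).
\]
Comparing the two displays reduces the lemma to the two scalar identities $c_{a,b}\,b' = b/a^2$ and $c_{a,b}^2 = b^2/a^2 - 1$, both immediate from the definitions of $c_{a,b}$ and $b'$. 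Since two real-analytic functions of $s$ that agree on $\rr$ have equal Taylor coefficients at $s=0$, equating the coefficients of $s^k$ yields $\fourier^{-1}[\phi(at)H_k(bt)](u) = Q_k\,\phi(u/a)\,H_k(b'u)$ for every $k$, which is the assertion.

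The one substantive technical point is the legitimacy of moving the Hermite series through the Fourier integral, and this is where I would expect to spend a little care. I would handle it by dominated convergence: Cauchy's estimates applied to the entire function $s\mapsto e^{sbt-s^2/2}$ give a bound $\sum_{k\ge 0}|H_k(bt)|\,|s|^k/k! \le C(s)\,e^{D(s)|t|}$ with $C(s),D(s)$ depending only on $s$, so the partial sums of $\sum_k H_k(bt)\phi(at)s^k/k!$ are dominated, uniformly in the truncation level, by $C(s)\,\phi(at)\,e^{D(s)|t|}\in L_1(\lambda)$; hence the sum may be pulled inside, and then the coefficient comparison above applies. An equivalent, series-free route — which I would mention as an alternative — is to use that multiplication by $t$ under $\fourier^{-1}$ corresponds to $-i\,d/du$, so that $\fourier^{-1}[H_k(bt)\phi(at)](u) = H_k\!\left(-ib\,\tfrac{d}{du}\right)\!\left[\tfrac1a\phi(u/a)\right]$, and then to evaluate this differential operator on the (entire) Gaussian via $\sum_k H_k(-ibD)s^k/k! = e^{-ibDs - s^2/2}$ together with the translation action $e^{\tau D}f(u) = f(u+\tau)$, which produces $H_k(-ib\,d/du)\phi(u/a) = (i c_{a,b})^k\phi(u/a)H_k(b'u)$ directly with the same algebra. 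In either presentation the only real content is the one-line Gaussian integral plus the matching of exponents.
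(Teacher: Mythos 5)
Your proposal is correct and follows essentially the same route as the paper: the paper also packages the Hermite polynomials via the generating function $\phi(at)\exp(btx-\tfrac12x^2)=\phi(at)\sum_k H_k(bt)x^k/k!$, evaluates the single Gaussian integral by completing the square, re-expands the result in the generating function, and matches coefficients of $x^k$. Your added care about interchanging the sum with the integral (and the alternative differential-operator route) goes beyond what the paper writes out, but the substance is identical.
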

%
%
\begin{remark}
Lemma \ref{hermiteFT} illustrates a general form of the
eigenvalue-eigenfunction relation for the Fourier transform of Hermite
functions,
\[
\fourier\bigl[\phi(t) H_k(\sqrt{2}t)\bigr] (u) = (-\mathrm{i})^k
\phi(u) H_k(\sqrt{2}u).
\]
(See (7.376) in \cite{table}, or for more details, see Section~4.11
in \cite{kawata}).
\end{remark}
We now formulate these arguments into a proof.
\begin{pf*}{Proof of Theorem \ref{mainthm}}
By Lemma \ref{hermiteFT}, defining $\{\psi_k, k \in K\}$ as in (\ref
{psikchoice}) leads to
%
%
\begin{equation}\label{Gk}
v_k(u) = C \sqrt{\frac{3^{k}}{k!}} \phi(u) H_{k} \biggl(\frac
{2}{\sqrt{3}} u \biggr) \qquad\mbox{for $k \in K$},
\end{equation}
because $\fourier^{-1}[\phi(t)H_k(2t)](u) = \mathrm{i}^k 3^{k/2} \phi(u)
H_{k}(2u/\sqrt{3})$.
By restricting to odd values of $k$, we make the $v_k$'s real-valued
and odd, thereby ensuring that $\int v_k \,\mathrm{d}\lambda=0$ and
$\int\pi
_\alpha\,\mathrm{d}\lambda=1$ for each $\alpha$ in $\{0,1\}^K$.

In summary, the choice of $v_k$ as in (\ref{Gk}) gives
%
%
%
\begin{eqnarray}
\frac{1}{2\uppi}\Vert f_\alpha-f_\beta\Vert
_2^2 =\varepsilon^2 \int
_{-\infty}^\infty\biggl( \sum
_{k \in K} (\alpha_k-\beta_k)
\psi_k(t) \biggr)^2 \,\mathrm{d}t =\varepsilon^2 \sum
_{k \in K} (\alpha_k-\beta_k)^2.
\label{l2loss}
\end{eqnarray}
That is, the condition (\ref{cond1}) of Lemma \ref{assouadsim} is
satisfied with $c_0=2\uppi$.

We still need to check the condition (\ref{cond2}), and also show
that $\varepsilon$ can be chosen small enough to make
all the $\pi_\alpha$'s nonnegative.
Actually, we first show that $\pi_\alpha\geq\pi_0/2 >0$ by choosing
%
%
\begin{equation}\label{halfg}
\varepsilon\leq\frac{1}{16}3^{-m+1/2}m^{-3/2},
\end{equation}
and by choosing $\pi_0 = \phi_m$.
Secondly, we determine the largest size $m$ while the two densities
$f_\alpha$ and $f_\beta$ are close in terms of the $\chi^2$ distance as
$\mathrm{O}(1/n)$ when there is only one different coordinate between
$\alpha$
and $\beta$.

To control the denominator in (\ref{cond2}), we first show that
$|v_k(u)| \leq C_k\sqrt{m} \pi_0(u)$ where $C_k = 8 \cdot3^{k/2}$.
By Cram$\acute{\mbox{e}}$r's inequality \cite{table}, equation (8.954)
%
%
\begin{equation}\label{cramer}
\bigl|H_k(u)\bigr| \leq\kappa\sqrt{k!} \exp\bigl(u^2/4\bigr) \qquad\mbox{with } \kappa
\approx1.086.
\end{equation}
Applying this inequality to (\ref{Gk}),
%
%
%
\begin{eqnarray}
\bigl|v_k(u)\bigr| 
&\leq&\kappa C3^{k/2}\frac{1}{\sqrt{2\uppi}}
\exp\biggl(-\frac
{1}{6}u^2 \biggr) \leq C_k
\phi(u/\sqrt{3}) \label{bound1}
\\
&\leq& C_k\phi(u/\sqrt{m}) = C_k \sqrt{m}
\pi_0(u). \label{bound2}
\end{eqnarray}
Using (\ref{bound2}), we have
\begin{eqnarray*}
\pi_\alpha(u) &=& \pi_0(u) + \varepsilon\sum
_{k \in K} \alpha_k v_k(u) \geq
\pi_0(u)- \varepsilon\sum_{k \in K}
C_k \sqrt{m} \pi_0(u)
\\
&\geq&\pi_0(u) \bigl[1- C_{2m-1} m^{3/2}
\varepsilon\bigr]
\\
&=&\pi_0(u) \bigl[1- 8 \cdot3^{m-1/2}m^{3/2}
\varepsilon\bigr] \geq\frac
{\pi_0(u)}{2}
\end{eqnarray*}
by the choice of $\varepsilon$ in (\ref{halfg}).

Hence, under the condition (\ref{halfg}), $\phi\star\Pi_\alpha:=
f_{\alpha} \geq f_0/2:= \phi\star\Pi_0/2$, which implies that the
second condition in Lemma \ref{assouadsim} is rewritten as $\int
(f_\alpha-f_\beta)^2/f_0 \leq c_1/2n$ for $\alpha$ and $\beta$ having
only one different coordinate.
The denominator $f_0 = \phi\star\Pi_0$ is again normally distributed
with mean zero and variance $1+m$ by the choice of $\mathrm{d}\Pi
_0/\mathrm{d}\lambda
:=\pi
_0 = \phi_m$ density.

For convenience, we let $\alpha_{1} \neq\beta_{1}$ (all the other
cases work the same way). By splitting the integral into two regions
$|x| \leq M\sqrt{m}$ and $|x| > M\sqrt{m}$ with a constant $M^2 =
8\log9$,
\begin{eqnarray*}
\int\frac{(f_{\alpha}-f_{\beta})^2}{f_{0}}= \int_{|x| \leq M\sqrt{m}}
\frac{(f_\alpha-f_\beta)^2}{f_0} +
\varepsilon^2 \int_{|x| > M\sqrt{m}} \frac{ (\int\phi(x-u) v_{1}(u)\,
\mathrm{d}\lambda)^2 }{f_0}.
\end{eqnarray*}

For the first integral, the denominator is bounded below on the
interval $\{|x| \leq M\sqrt{m}\}$, since
\[
f_0(x) \bigl\{ |x| \leq M\sqrt{m}\bigr\} > \exp\bigl(-M^2/2
\bigr)/(2\sqrt{2\uppi} \sqrt{m}) := 1/\bigl(C^\ast\sqrt{m}\bigr),
\]
where $C^\ast:= 2\sqrt{2\uppi} \exp(M^2/2)$. Then, using the $\LL_2$
loss calculation from (\ref{l2loss}),
\[
\int_{|x| \leq M\sqrt{m}} \frac{(f_{\alpha}(x)-f_{\beta
}(x))^2}{f_{0}(x)} \,\mathrm{d}x \leq C^\ast
\sqrt{m} \Vert f_{\alpha}-f_{\beta}\Vert_2^2
= 2\uppi C^\ast\sqrt{m} \varepsilon^2.
\]
%

For the second integral, recall that for any $k = 1,3,\ldots,2m-1$, we have
\[
\bigl|v_{k}(u)\bigr| \leq C_{2m-1} \phi(u/\sqrt{3}) := C_{2m-1}
\sigma_0 \phi_{\sigma_0^2}
\]
with $\sigma_0=\sqrt{3}$ as in (\ref{bound1}). Using $C_{2m-1} := 8
\cdot3^{m-1/2}$ and $\phi_{1+\sigma_0^2}(x) \leq\sqrt{m} \phi
_{1+m}(x)$, with a notation $R(x) := \{|x|>M\sqrt{m}\}$, we bound the
second integral:
\begin{eqnarray*}
\varepsilon^2 \int_{R(x)} \frac{ (\int\phi(x-u) v_{1}(u)\,\mathrm
{d}\lambda
)^2}{f_0(x)}\,\mathrm{d}x &
\leq&\varepsilon^2C_{2m-1}^2 \sigma_0^2
\int_{R(x)} \frac{ (
\int
\phi(x-u)\phi_{\sigma_0^2}(u)\,\mathrm{d}\lambda)^2}{\phi_{1+m}(x)}\,
\mathrm{d}x
\\
&\leq&\sqrt{m}\varepsilon^2 C_{2m-1}^2
\sigma_0^2 \int_{R(x)} \phi
_{1+\sigma_0^2}(x) \,\mathrm{d}x
\\
&=& \biggl( \frac{64}{3} \sqrt{m} \varepsilon^2 \biggr) \biggl(
3^{2m} \int_{R(x)} \phi_{1+\sigma_0^2}(x) \,\mathrm{d}x \biggr)
\\
&\leq&\frac{64}{3}\sqrt{m}\varepsilon^2.
\end{eqnarray*}
Here the last inequality is obtained by a Gaussian tail property with
$\sqrt{m} \gg\sigma_0 := \sqrt{3}$, namely
\[
\int_{|x|>M\sqrt{m}} \phi_{1+\sigma_0^2}(x) \,\mathrm{d}x\leq\exp\biggl(-
\frac
{1}{8} M^2m \biggr) =3^{-2m}
\]
since
$ M^2 = 8 \log9$.

Combining these two upper bounds for the integral, we obtain
\[
\int\frac{(f_\alpha-f_\beta)^2}{f_0} \leq\sqrt{m} \varepsilon^2 \bigl(
2\uppi
C^\ast+ 64/3 \bigr)= \frac{c_1}{2n}
\]
as long as
%
%
\begin{equation}\label{boundsigmam}
\sqrt{m} \varepsilon^2 \leq\frac{1}{n}\frac{c_1}{ 2(2\uppi C^\ast
+64/3)}.
\end{equation}
%

As a consequence, the constructed mixing densities fulfil the two
requirements in Assouad's lemma under conditions (\ref{halfg}) and
(\ref
{boundsigmam}), with
\[
\varepsilon^2 \leq\min\biggl(\frac{1}{16^2}3^{-2m+1}m^{-3},
\frac
{1}{n\sqrt{m}} \frac{c_1}{2(2\uppi C^\ast+64/3)} \biggr).
\]
From Lemma \ref{assouadsim}, the lower bound is obtained as
$c\varepsilon
^2 m$,
which is at most
\[
\min\bigl(3^{-2m}m^{-2}, \sqrt{m}/n\bigr)
\]
up to a constant.
To find the largest $m \varepsilon^2$, by equating $3^{-2m}m^{-2} =
\sqrt{m}/n$, we obtain $m$ and $\varepsilon^2$ as $\log n$ and
$1/(n\sqrt{\log
n})$, respectively, up to a constant, and hence the lower bound is
obtained as $ \sqrt{\log n}/n$ up to a constant.
\end{pf*}

\subsection{Ideas in the proof of Theorem \texorpdfstring{\protect\ref{mainthm2}}{1.3}}
Here we let $W(f,g) := \Vert\sqrt{f}-\sqrt{g}\Vert_2^2 = \int(\sqrt
{f}-\sqrt{g})^2$, so (\ref{metricW}) is satisfied with $\zeta= 1$.
First, we relate the Hellinger distance and the $\chi^2$ distance.
That is, suppose we can show
$(1/2) \pi_0(u) \leq\pi_\varsigma(u) \leq(3/2) \pi_0(u)$ so that
$ (1/2)f_0(x) \leq f_\varsigma(x) \leq(3/2)f_0(x)$ for both
$\varsigma
=\alpha$ and $\varsigma= \beta$, by convolving with the standard
normal density.
Then, using the upper bound for $f_\alpha$ and $f_\beta$,
%
%
\begin{equation}\label{uppertheta}
\int( \sqrt{f_\alpha}-\sqrt{f_\beta} )^2
= \int\frac{(f_\alpha-f_\beta)^2}{(\sqrt{f_\alpha}+ \sqrt{f_\beta})^2}
\geq\frac{1}{6} \int\frac{(f_\alpha-f_\beta)^2}{f_0}. 
\end{equation}

Similarly, the lower bound for $f_\alpha$ would give an upper bound for
the testing condition
%
%
\begin{equation}\label{lowertheta}
\int\frac{(f_\alpha-f_\beta)^2}{f_\alpha} \leq2 \int\frac
{(f_\alpha
-f_\beta)^2}{f_0}.
\end{equation}
Thus it would be enough to work with the following quantity
\[
\int\frac{(f_\alpha-f_\beta)^2}{f_0} = \int\biggl( \frac{f_\alpha
}{\sqrt{f_0}} -\frac{f_\beta}{\sqrt{f_0}}
\biggr)^2 = \varepsilon^2 \int\biggl( \sum
_{k \in K} (\alpha_k-\beta_k)
\frac{\phi\star
v_k}{\sqrt{f_0}} \biggr)^2,
\]
where the second equality is given by (\ref{formG}).

At first glance, $\int(f_\alpha-f_\beta)^2/f_0$ does not look amenable
to Fourier techniques.
However, as Lemma \ref{absorbg0} below shows, $\phi\star v_k/\sqrt
{f_0}$ is expressed as convolution of a normal density (with a variance
larger than $1$) with a certain choice of the perturbation function
$v_k$ and base function $\pi_0 = \phi_{\sigma^2}$.

%
\begin{lemma}\label{absorbg0}
Consider the perturbation functions
\[
v_k(u)= \frac{C_k}{\sqrt{k!}} \phi(\rho u) H_k(\gamma u),\qquad
\rho^2 \geq\frac{1}{\sigma^2}+\frac{\gamma^2}{2},
\]
where $C_k$ is a constant depending on $k$ and $\gamma>0$.
Then
%
%
\begin{equation}\label{tildeconvol}
\frac{[\phi\star v_k](x)}{\sqrt{\phi\star\phi_{\sigma^2}(x)}}
= \phi_{\tilde\sigma^2} \star\tilde v_k,
\end{equation}
where
%
%
\begin{equation}\label{tildevk}
\tilde v_k(u) :=\frac{\tilde C_k}{\sqrt{k!}} \phi(\tilde\rho u) H_k
(\tilde\gamma u),
\end{equation}
with
%
%
\begin{equation}\label{tildedef}
\tilde\sigma^2 = 1+ \frac{1}{2\sigma^2+1}, \qquad\tilde C_k = C_k \frac{
(4\uppi)^{1/4} }{\tilde\sigma} ,\qquad \tilde\rho= \frac{\sqrt{\rho
^2+1-\tilde\sigma^2}}{\tilde\sigma^2},\qquad \tilde\gamma= \frac
{\gamma
}{\tilde\sigma^2}.
\end{equation}
\end{lemma}

By Lemma \ref{absorbg0}, the denominator effect can be incorporated
into the normal convolution.
Then we follow similar ideas used in the proof of Theorem \ref{mainthm}.


%
\begin{pf*}{Proof of Theorem \ref{mainthm2}}
Again, the choice of $v_k$'s is suggested by Fourier methods.
For convenience, we let $\pi_0 = \phi$, so $f_0 = \phi_{2}$ and
$\sqrt{f_0} = 2 \pi^{1/4} \phi_{4}$.
Assuming $\tilde v_k$ in (\ref{tildevk}) are in~$\LL_2$,
\begin{eqnarray*}
\fourier\biggl[\frac{f_\alpha}{\sqrt{f_0}} \biggr](t) &=&\fourier[
\sqrt{f_0}
](t) + \varepsilon\sum_{k \in K} \alpha_k
\fourier\biggl[ \frac{\phi\star v_k}{\sqrt{f_0}} \biggr](t)
\\
&=& \fourier[ \sqrt{f_0}](t) + \varepsilon\sum
_{k \in K} \alpha_k \fourier\phi_{{4}/{3}}(t)
\fourier\tilde v_k (t)
\end{eqnarray*}
by Lemma \ref{absorbg0}.

By the Plancherel formula,
\begin{eqnarray*}
\frac{1}{2\uppi} \biggl\llVert\frac{f_\alpha}{\sqrt{f_0}} - \frac
{f_\beta
}{\sqrt{f_0}} \biggr
\rrVert_2 
 = \varepsilon^2 \int
\biggl\llvert\sum_{k \in K} (\alpha_k-
\beta_k) \fourier\phi_{{4}/{3}}(t) \fourier[\tilde
v_k] (t) \biggr\rrvert^2 \,\mathrm{d}t,
\end{eqnarray*}
which lets us write the condition (\ref{cond1}) in Assouad's Lemma
\ref
{assouadsim} as
\[
\int\biggl\llvert\sum_{k \in K} (\alpha_k-
\beta_k) \fourier[\phi_{
{4}/{3}}](t) \fourier[\tilde
v_k] (t) \biggr\rrvert^2 \,\mathrm{d}t \geq\frac
{3c_0}{\uppi
} \sum
_{k \in K}(\alpha_k-\beta_k)^2\qquad
\forall\alpha, \beta\in\{ 0,1\}^K,
\]
with $\delta=\uppi c_0 \varepsilon^2/3$ by (\ref{uppertheta}).

Similar to the case for squared error loss, we might achieve even an
equality with $c_0=\uppi/3$ by choosing $\tilde v_k$'s to make the
functions $\psi_k(t) := \fourier[\phi_{{4}/{3}}](t) \fourier
[\tilde v_k] (t) $ orthonormal.
Ignoring other requirements, we also start from the same orthonormal
set (\ref{psikchoice}),
and then try to define $\tilde v_k$ as the inverse Fourier transform.

From the fact that
\[
\fourier[\phi_{{4}/{3}}](t) = \frac{1}{\sqrt{2\uppi}} \exp\biggl(-
\frac
{2}{3} t^2\biggr)
\]
and by definition of $\tilde v_k$ in (\ref{tildevk}),
the requirement is that
%
%
\begin{equation}\label{settingortho}
\psi_k(t) := \frac{\tilde C_k}{\sqrt{2\uppi k!}} \exp\biggl(-\frac{2 t^2}{3}\biggr)
\fourier\bigl[ \phi(\tilde\rho u) H_k(\tilde\gamma u)\bigr](t)
=
\mathrm{i}^{-k} \sqrt{2\phi(2t)} \frac{H_k(2t)}{\sqrt{k!}}.
\end{equation}
If we determine all the parameters to make (\ref{settingortho}) true,
we have the desired property for the loss separation condition (\ref
{cond1}), that is, we have
%
%
\begin{equation}\label{hellingerL2}
\int\frac{(f_\alpha-f_\beta)^2}{f_0} =2\uppi\varepsilon^2 \sum_{k \in K}
(\alpha_k-\beta_k)^2.
\end{equation}
%
We have to find $\tilde\rho$, $\tilde\gamma$ and $\tilde C_k$ so that
(\ref{settingortho}) is satisfied.
The solutions are derived below and given in (\ref{tildesol}).

After some calculations,
\[
\fourier\bigl[ \phi(\tilde\rho u) H_k(\tilde\gamma u)\bigr](t) =
\mathrm{i}^{-k} \frac{\sqrt{2}(2\uppi)^{3/4}}{\tilde C_k} \phi\biggl(
t\sqrt{\frac
{2}{3}}
\biggr) H_k(2t),
\]
which leads to
\[
\fourier^{-1} \biggl[ \phi\biggl(t\sqrt{\frac{2}{3}} \biggr)
H_k(2t) \biggr](u) = \mathrm{i}^{k}\frac{\tilde C_k}{\sqrt{2}(2\uppi
)^{3/4}} \phi(\tilde
\rho u) H_k(\tilde\gamma u).
\]
%
Substituting $a=\sqrt{2/3}$ and $b=2$ into Lemma \ref{hermiteFT},
we have the following solutions,
%
%
\begin{equation}\label{tildesol}
\tilde C_k = (2\uppi)^{3/4}\sqrt{3}\sqrt{5^k},\qquad \tilde\rho= \sqrt{\frac
{3}{2}},\qquad \tilde\gamma= \frac{3}{\sqrt{5}} .
\end{equation}

We need to ensure that the choice of $\sigma^2 =1$ satisfies the
inequality $\rho^2 \geq\frac{1}{\sigma^2} + \frac{\gamma^2}{2}$ needed
for the Lemma \ref{absorbg0}.
Comparing (\ref{tildedef}) and (\ref{tildesol}), we obtain
$\rho^2 = 3$ and $\gamma= \frac{4}{\sqrt{5}}$, which satisfy the condition.
Also, $C_k$ is obtained as $C_k = (2^{5/4} \sqrt{\uppi})\sqrt{5}^k$.

Therefore, this choice for the $\psi_k$'s leads to
%
%
\begin{equation}\label{choice2}
v_k(u) = 2^{5/4}\sqrt{\uppi}\sqrt{\frac{5^k}{k!}} \phi(\sqrt{3} u) H_k
\biggl( \frac{4}{\sqrt{5}} u \biggr)\qquad \mbox{for $k \in K$}.
\end{equation}
By restricting to odd values of $k$, we make the $v_k$'s real-valued
and odd, thereby ensuring that $\int v_k \,\mathrm{d}\lambda= 0$.

Using exactly the same idea as in the previous section, if
%
%
\begin{equation}\label{condhalfgH}
\varepsilon\leq\frac{1}{2\kappa mC_{2m-1}}\qquad \mbox{with $\kappa\simeq1.086$},
\end{equation}
then
%
%
\[
\tfrac{1}{2}\pi_0(u) \leq\pi_\alpha(u) \leq\tfrac{3}{2}\pi_0(u)\qquad
\mbox{for
all $u \in\rr, \alpha\in\{0,1\}^K$}.
\]

Now the second testing condition can be treated straightforwardly.
Indeed, once we choose orthonormal functions $\{\psi_k, k \in K\}$, we obtain
\[
\int\frac{(f_\alpha-f_\beta)^2}{f_\alpha} \leq\int2\frac
{(f_\alpha
-f_\beta)^2}{f_0}= 4\uppi\varepsilon^2\qquad
\mbox{for $\Vert\alpha-\beta\Vert_0=1$ by (\ref{lowertheta}) and (
\ref{hellingerL2})}.
\]
%
Thus it is enough to choose $\varepsilon^2 < 1/(4\uppi n)$.
With our choice $\varepsilon= 1/(4\sqrt{n})$, the testing condition is
satisfied.

From the lower bound $m\varepsilon^2$, we want to choose $m$ as large
as possible.
The condition in (\ref{condhalfgH}) restricts the size of $m$,
\[
2\kappa mC_{2m-1} <6m 5^m \leq4\sqrt{n}.
\]
Thus, we have the upper bound for $m$,
\[
m \lesssim\bigl(1/(2\log5)\bigr) \log n \simeq(0.31) \log n.
\]

Finally, we check these constructed $\pi_\alpha$'s are inside of the
parameter space $\PP_s(\rr)$.
From the fact that $\pi_\alpha(u) \leq(3/2)\pi_0(u)$ for all $u \in
\rr
$ and $\alpha\in\{0,1\}^K$,
it is clear that $\pi_0$ is in the space $\PP_s(\rr)$ from the tail
property of normal density.

Consequently, the lower bound is obtained as $\log n/n$ up to a constant.
\end{pf*}
%
%

\section{Proof of the upper bound}\label{upperbound}
For the reader's convenience, we summarize the arguments for Theorem
\ref{ibragimovthm}, following pages~365--369 by \cite{ibragimov2001}.
Before turning to that result, we first show that if $f = \phi\star
\Pi\in\FF$, then $f$ can be extended to an entire function $f^*$.
To see this, we let $f^*(x+\mathrm{i}y) =\frac{1}{\sqrt{2\uppi}} \int
\exp
(-(x+\mathrm{i}y-u)^2)/2 )\,\mathrm{d}\Pi(u)$. Defining $a(x,y,u)
\equiv y(u-x), b(x,y,u) \equiv-\frac{1}{2} \{(x-u)^2-y^2 \}$, we write
\begin{eqnarray*}
\sqrt{2\uppi}f^*(x+\mathrm{i}y)&=&\int\bigl[ \bigl\{\cos a (x,y,u ) +
\mathrm{i} \sin a ( x,y,u )
\bigr\} \mathrm{e}^{b(x,y,u)} \bigr]\,\mathrm{d}\Pi(u)
\\
&=& \int\bigl\{ \cos a (x,y,u )\mathrm{e}^{b(x,y,u)} \bigr\} \,\mathrm
{d}\Pi(u) + \mathrm{i} \int\bigl
\{\sin a (x,y,u )\mathrm{e}^{b(x,y,u)} \bigr\} \,\mathrm{d}\Pi(u)
\\
&:=& v(x,y) + \mathrm{i}w(x,y).
\end{eqnarray*}
By differentiating under the integral (see Theorem 16.8 in \cite{billingsley}),
\begin{eqnarray*}
\frac{\partial v}{\partial x} &=&\int\bigl[ \bigl\{ y \sin a(x,y,u)+
(u-x)\cos a(x,y,u)
\bigr\} \mathrm{e}^{b(x,y,u)} \bigr] \,\mathrm{d}\Pi(u) = \frac
{\partial
w}{\partial y},
\\
\frac{\partial v}{\partial y} &= &\int\bigl[ \bigl\{ y\cos
a(x,y,u)-(u-x)\sin a(x,y,u)
\bigr\} \mathrm{e}^{b(x,y,u)} \bigr] \,\mathrm{d}\Pi(u)= -\frac
{\partial w}{\partial x}.
\end{eqnarray*}
Also note that $\partial v/\partial x,\partial v/\partial y, \partial
w/\partial x$, and $\partial w/\partial y$ are continuous. Then by
Cauchy--Riemann theorem (see Theorem 1.5.8 in \cite{marsden1999}),
$f^*$ is analytic.

Now it suffices to show that $f^*$ satisfies the growth condition.
Indeed,
%
%
%
\begin{eqnarray}
\label{analyticcond} \sup_{x} \bigl\llvert f^*(x+\mathrm{i}y)\bigr
\rrvert&= &\sup_x \biggl\llvert\frac{1}{\sqrt{2\uppi
}}\int\exp
\biggl( -\frac{(x+\mathrm{i}y-u)^2}{2} \biggr) \,\mathrm{d}\Pi(u) \biggr
\rrvert
\nonumber
\\
&\leq&\frac{1}{\sqrt{2\uppi}} \sup_x\int\biggl\llvert\exp
\biggl( -\frac
{(x+\mathrm{i}y-u)^2}{2} \biggr) \biggr\rrvert\,\mathrm{d}\Pi(u)
\nonumber
\\[-8pt]
\\[-8pt]
\nonumber
&\leq&\frac{1}{\sqrt{2\uppi}} \exp\biggl(
\frac{y^2}{2} \biggr) \sup_x \int\,\mathrm{d}\Pi(u) \\
&=&
\frac{1}{\sqrt{2\uppi}} \exp\biggl( \frac
{y^2}{2} \biggr).\nonumber
\end{eqnarray}
Thus, $\FF\subseteq\FF^*$, which ensures that Ibragimov's estimation
also gives the upper bound to match Theorem \ref{mainthm}.

%
\begin{pf*}{Proof of Theorem \ref{ibragimovthm}}
Ibragimov used a \textit{sinc} kernel estimator,
\[
\hat f_n(x)= \frac{1}{nh} \sum_{j=1}^n
\KK\biggl( \frac
{X_j-x}{h} \biggr),\qquad \KK(u) = \frac{\sin(u)}{\uppi u},
\]
with $h = 1/\sqrt{\log n}$.
It is important for his method that the Fourier transform of $\KK$ is
$\breve\KK(t) =\frac{1}{\sqrt{2\uppi}} \one\{|t|\leq1\}$ and also
$\breve\KK^2(t) =\frac{1}{\sqrt{2\uppi}} \frac{1}{\uppi}
(1-\frac
{|t|}{2} )_{+}$ where $x_{+} = \max(x,0)$.

The expected mean integrated squared error (MISE) has the usual squared
bias and variance decomposition.
As usual, the variance term is bounded by $(nh)^{-1} \int\KK^2(u)\,
\mathrm{d}u$.
For the bias term, note that $\ee_{n,f} \hat f$ has the Fourier
transform $\sqrt{2\uppi} \breve f(t) \breve\KK(ht)$, so that
\begin{eqnarray*}
\operatorname{bias}^2 &:=& \int(\ee_{n,f}\hat
f_n - f )^2 = \int\bigl\llvert\fourier[
\ee_{n,f} \hat f_n](t) - \fourier[f](t) \bigr\rrvert
^2 \,\mathrm{d}t \qquad\mbox{by Plancherel}
\\
&=& \int\bigl\llvert\breve f(t) \bigr\rrvert^2 \bigl\llvert\sqrt{2
\uppi} \breve\KK(ht) - 1 \bigr\rrvert^2 \,\mathrm{d}t
\\
&=& \int_{|t| \geq1/h} \bigl\llvert\breve f(t) \bigr\rrvert
^2 \,\mathrm{d}t \qquad\mbox{by the form of } \breve\KK
\\
&\leq& 2 \mathrm{e}^{-y/h} \int\mathrm{e}^{-yt} \bigl\llvert\breve
f(t)\bigr
\rrvert^2 \,\mathrm{d}t\qquad \mbox{for } y>0
\\
&=&2 \mathrm{e}^{-y/h} \lim_{M \rightarrow\infty} \int\mathrm{e}^{-yt}
\bigl
\llvert\breve f(t) \bigr\rrvert^2 \biggl(1-\frac{|t|}{M}
\biggr)_{+} \,\mathrm{d}t.
\end{eqnarray*}
Write the last integral as
\begin{eqnarray*}
\int\mathrm{e}^{-yt} \bigl\llvert\breve f(t) \bigr\rrvert^2
\biggl( 1-\frac{|t|}{M} \biggr)_{+}\,\mathrm{d}t = \int\breve f(t)
\mathrm{e}^{-yt} \overline{\breve\vartheta(t)}\,\mathrm{d}t = \int
\breve f(t)
\mathrm{e}^{-yt}\breve\vartheta(-t) \,\mathrm{d}t, 
\end{eqnarray*}
where $\breve\vartheta(t) =\breve f(t) (1-\frac{|t|}{M}
)_{+}$ is the Fourier transform of the nonnegative function $\vartheta
(x) = \frac{M\pi^2}{\sqrt{2\uppi}} \int f(u) \KK^2 (\frac{M}{2}(x-u)
) \,\mathrm{d}u$.
Using $\vartheta(x+\mathrm{i}y) = \frac{1}{\sqrt{2\uppi}} \int
\mathrm{e}^{\mathrm{i}tx}\mathrm{e}^{-yt}\breve\vartheta(t)\,\mathrm
{d}t$, we have $\int f(x) \vartheta
(x+\mathrm{i}y)\,\mathrm{d}x = \int\breve f(t) \mathrm{e}^{yt}\breve
\vartheta(-t)\,\mathrm{d}t$ by
Parseval's theorem.
By changing the contour of the integration, $\int f(x+\mathrm{i}y)
\vartheta(x)
\,\mathrm{d}x = \int\breve f(t) \mathrm{e}^{-yt}\breve\vartheta(-t)\,
\mathrm{d}t$.
Combining these ideas,
\[
\int\breve f(t) \mathrm{e}^{-yt}\vartheta(-t)\,\mathrm{d}t = \int
f(x+\mathrm{i}y) \vartheta(x)\,\mathrm{d}x
\leq\int\sup_x \bigl\llvert f(x+\mathrm{i}y) \bigr\rrvert
\vartheta(x) \,\mathrm{d}x \leq\frac
{\exp
(y^2/2)}{\sqrt{2\uppi}},
\]
where the last inequality follows by (\ref{analyticcond}) together with
$\int\vartheta(x)\,\mathrm{d}x = \sqrt{2\uppi}\breve\vartheta(0)=1$.
By taking $y = 1/h$, we obtain the upper bound as $\sqrt{\log n}/n :=
\ell_n$ up to a constant.
\end{pf*}
\section{Discussion}
It has been claimed that the Fano's method is more general in a sense
(see \cite{yu97lecam}, page 428).
Indeed, using Varshamov--Gilbert's lemma (e.g., Lemma 2.9 in \cite
{tsybakov2009}), it is not very difficult to prove the same rate result
for $\FF$ with a similar type of sub-parameter space using Fano's method.

However, Assouad's method seems more convenient in some cases.
For instance, before knowing how to construct the subspace, it would be
extremely difficult to determine the right family of densities when
there are only indirect regularity conditions as in this example.
Assouad's hyper-rectangle method indicates that the problem can be
solved if we can show the orthogonality relations between the
constructed densities.
These added regularity conditions can cause different difficulties, but
we at least have some clues to handle these problems.

On the other hand, if we know metric entropy (good packing and covering
number bounds) results beforehand, the optimal minimax rates can be
obtained almost automatically with the predictive Bayes density
estimator using the main theorems in \cite{yangbarron1999}.
It will be interesting to see if we can calculate a sharper metric
entropy for $\FF$ or $\FF_s$ than the one that appeared in \cite
{ghosalvaart2001}.

\begin{appendix}\label{app}
\section*{Appendix}
\begin{pf*}{Proof of Lemma \ref{assouadsim}}
Most of the proof is based on ideas borrowed from \cite{lecam73,tsybakov2009},
and some unpublished notes by David Pollard. Denote $A=\{0,1\}^K$ and
for convenience denote $\ee_\alpha$ for $\ee_{f_\alpha}$ and
${\mathbb P}_\alpha
$ for ${\mathbb P}_{f_\alpha}$ where ${\mathbb P}_{f_\alpha} =
P_{f_\alpha}^n$. For any
density estimator $\hat f_n$ based on the observation $X_1,\ldots,X_n$,
define an estimator
\[
\hat\alpha= \mathop{\arg\min}_{\alpha\in A} W(\hat f_n,
f_\alpha).
\]
%
By restricting the parameter space and by the definition of $\hat
\alpha$,
\begin{eqnarray*}
\sup_{f \in\FF} \ee_f W(\hat f_n, f) &\geq&
\max_{\alpha\in A} \ee_\alpha W (\hat f_n,f_\alpha)
\\
&\geq&\frac{1}{2}\max_{\alpha\in A} \ee_\alpha\bigl(W(
\hat f_n,f_\alpha)+ W(\hat f_n,f_{\hat\alpha})
\bigr)
\\
&\geq&\frac{\zeta}{2} \max_{\alpha\in A} \ee_\alpha W
(f_\alpha,f_{\hat
\alpha})
\end{eqnarray*}
using the pseudo-distance property (\ref{metricW}).
Now, using the condition (\ref{cond1}) in the lemma followed by the
simple fact that the supremum is bounded by the average, the last
equation can be lower bounded by
\[
\frac{c_0 \varepsilon^2 \zeta}{2} \max_{\alpha\in A} \sum
_{k=1}^m \ee_\alpha\one\{
\alpha_k \neq\hat\alpha_k \} \geq\frac{c_0
\varepsilon
^2 \zeta}{2}
\frac{1}{2^m} \sum_{\alpha\in A} \sum
_{k=1}^m \ee_\alpha\one\{
\alpha_k \neq\hat\alpha_k\}.
\]

Define
\[
\bar{\mathbb P}_{0,k}=\frac{1}{2^{m-1}} \sum
_{\alpha\in A_{0,k}} {\mathbb P}_\alpha,\qquad \bar{\mathbb
P}_{1,k}=\frac{1}{2^{m-1}} \sum_{\alpha\in
A_{1,k}} {
\mathbb P} _\alpha,\qquad  k=1,\ldots,m,
\]
where $A_{i,k}=\{\alpha\in A\dvt\alpha_k=i\}$ for $i=0,1$.

Since $\alpha_k, \hat\alpha_k\in\{0,1\}$, we have
\begin{eqnarray*}
\frac{1}{2^m}\sum_{\alpha\in A} \sum
_{k=1}^m \ee_\alpha\one\{ \alpha
_k\neq\hat\alpha_k\} 
&=&\frac{1}{2^m} \sum_{k=1}^m \biggl(
\sum_{\alpha\in A_{0,k}} {\mathbb P} _{\alpha} \one\{ \hat
\alpha_k \neq0\} + \sum_{\alpha\in A_{1,k}} {\mathbb P}
_{\alpha} \one\{\hat\alpha_k \neq1\} \biggr)
\\
&= &\frac{1}{2} \sum_{k=1}^m \bigl(
\bar{\mathbb P}_{0,k} \one\{ \hat\alpha_k \neq0\} + \bar{
\mathbb P}_{1,k} \one\{ \hat\alpha_k \neq1\} \bigr),
\end{eqnarray*}
which gives us the following lower bound
\[
\sup_{f \in\FF} \ee_{f} W (\hat f_n , f)
\geq\frac{c_0 \varepsilon^2 \zeta}{4} \sum_{k=1}^m\Vert
\bar{\mathbb P}_{0,k} \wedge\bar{\mathbb P}_{1,k} \Vert
_1
\]
by $Ph+Q(1-h) \geq\Vert P \wedge Q\Vert_1$ for $h \geq0$ with
$h=\mathbh
{1}\{
\hat\alpha\neq0\}$.

For $k=m$, each $\alpha$ in $A_{0,m}$ is of the form $(\gamma,0)$ with
$\gamma\in D:= \{0,1\}^{m-1}$. Similarly, each $\alpha$ in $A_{1,m}$
is of the form $(\gamma,1)$ with $\gamma\in D$. Now
\[
\Vert\bar{\mathbb P}_{0,m} \wedge\bar{\mathbb P}_{1,m} \Vert
_1=\int\biggl(\frac
{1}{2^{m-1}} \sum_{\gamma\in D}
p_{\gamma,0} \biggr) \wedge\biggl( \frac{1}{2^{m-1}} \sum
_{\gamma\in D} p_{\gamma,1} \biggr) \geq\int\frac{1}{2^{m-1}} \sum
_{\gamma\in D} (p_{\gamma,0} \wedge p_{\gamma,1}).
\]
Note that $(\gamma,0)$ and $(\gamma,1)$ have only one different
coordinate. By similar calculations for other $k'$s, we obtain
\[
\sup_{f \in\FF} \ee_f W(\hat f_n, f) \geq
\frac{c_0 \varepsilon^2
\zeta
}{4}m \min_{d(\alpha, \beta)=1} \Vert{\mathbb P}_\alpha
\wedge{\mathbb P}_\beta\Vert_1.
\]

In general, it is difficult to calculate the testing affinity exactly.
Fortunately, a convenient lower bound in terms of distances between
marginals is available when ${\mathbb P}_\alpha$ and ${\mathbb
P}_\beta$ are both
product measures. For instance, when ${\mathbb P}_\alpha= P_{\alpha
}^n$ for
i.i.d. case, we can bound this using the chi-squared distance $\chi^2$
by the following relation.
\[
\bigl(1-\Vert{\mathbb P}_\alpha\wedge{\mathbb P}_\beta\Vert
_1\bigr)^2 \leq n \chi^2(P_\alpha,
P_\beta) :=n \int\frac{(\theta_\alpha-\theta_\beta)^2}{\theta_\alpha}.
\]

Thus, the condition (\ref{cond2}) in the lemma yields a lower bound for
the maximum risk
\[
\sup_{f \in\FF} \ee_f W(\hat f_n, f) \geq
\frac{c_0\varepsilon^2
\zeta
}{4}m(1-\sqrt{c_1}).
\]
\upqed\end{pf*}

See \cite{tsybakov2009}, Lemma 2.7 on page 90, or \cite{lecam73},
Lemma 1 on page 40, for the derivation of facts about relations between
distances.

\begin{pf*}{Proof of Lemma \ref{hermiteFT}}
For $b>a >0$, we have
\[
\phi(at) \exp\biggl(btx- \frac{1}{2}x^2 \biggr) = \phi(at)
\sum_{k=0}^\infty\frac{H_k(bt)}{k!}x^k.
\]
Thus,
\begin{eqnarray*}
\fourier^{-1} \biggl[ \phi(at) \exp\biggl( bt x -
\frac{1}{2}x^2 \biggr) \biggr] (u) &=& \int_{-\infty}^\infty
\frac{\exp( \mathrm{i} tu)}{2\uppi}\exp\biggl(-\frac{a^2
t^2}{2} + bxt - \frac{1}{2}x^2
\biggr) \,\mathrm{d}t
\\
&=& \frac{1}{a\sqrt{2\uppi}}\exp\biggl(
\frac{(bx + \mathrm{i} u)^2}{2a^2} - \frac
{1}{2} x^2 \biggr)
\\
&=&\frac{1}{a} \phi\biggl(\frac{u}{a} \biggr) \exp\biggl(
\frac{ bx u
\mathrm{i}}{a^2} - \frac{1}{2} (\mathrm{i} x c_{a,b} )^2
\biggr)
\\
&=&\frac{1}{a}\phi\biggl(\frac{ u}{a} \biggr)\sum
_{k=0}^\infty\frac
{ H_k
( b/(a^2 c_{a,b}) u )}{k!} (\mathrm{i} c_{a,b}
)^{k} x^k.
\end{eqnarray*}
The inverse Fourier transform of the right side is
\[
\sum_{k=0}^\infty\fourier^{-1}
\biggl[ \phi(at) \frac{H_k(b t)}{k!} \biggr] (u) x^k.
\]
By matching the coefficient for the $k${th} power of $x$,
\[
\fourier^{-1} \bigl[ \phi(at) H_k(b t) \bigr](u) = (\mathrm{i}
c_{a,b} )^{k}\frac{1}{a}\phi\biggl(\frac{u}{a}
\biggr) H_k \biggl( \frac{b}{a^2
c_{a,b}} u \biggr) ,
\]
%
which proves the claim.
\end{pf*}


\begin{pf*}{Proof of Lemma \ref{absorbg0}}
First, note that $\phi\star\phi_{\sigma^2} = \phi_{1+\sigma^2}$.
We define $[\phi\star v_k(u)](x) = \int\phi(x-u)v_k(u)\,\mathrm{d}u$ and
similarly $[\phi\star\phi(\rho u) H_k(\gamma u)](x) = \int\phi(x-u)
\phi(\rho u) H_k(r u) \,\mathrm{d}u$.
By definition of $v_k$, we have
\begin{eqnarray*}
\frac{[\phi\star v_k(u)](x)}{\sqrt{\phi_{1+\sigma^2}(x)}} &=& \frac
{C_k}{\sqrt{k!}}\frac{[\phi\star\phi(\rho u) H_k(\gamma u)]
(x)}{\sqrt{\phi_{1+\sigma^2}(x)}}
\\
&=& \frac{C_k}{\sqrt{k!}} \int\frac{({1}/{\sqrt{2\uppi}})\exp
(-
{1}/{2}(x-u)^2) ({1}/{\sqrt{2\uppi}}) \exp(-({1}/{2})\rho^2 u^2)
H_k(\gamma u)}{(2\uppi(1+\sigma^2))^{-1/4} \exp(-({1}/{4})
({x^2}/{(1+\sigma^2)}))} \,\mathrm{d}u. 
\end{eqnarray*}
%
Now, by completing the square,
\begin{eqnarray*}
&&\exp \biggl(-\frac{1}{2}(x-u)^2 \biggr)\exp\biggl(-
\frac{1}{2}\rho^2 u^2 \biggr) \exp\biggl(
\frac{1}{4} \frac{x^2}{1+\sigma^2} \biggr)
\\
&&\qquad=\exp\biggl( \biggl(-\frac{1}{2}+\frac{1}{4(1+\sigma^2)} \biggr)
x^2 + xu - \biggl(\frac{1}{2}+ \frac{1}{2}
\rho^2 \biggr)u^2 \biggr)
\\
&&\qquad= \exp\biggl( -\frac{1}{2\tilde\sigma^2}x^2+ xu - \biggl(
\frac{1}{2}+ \frac{1}{2}\rho^2 \biggr)u^2
\biggr)\qquad \mbox{by definition of $\tilde\sigma^2$ in (\ref{tildedef})}
\\
&&\qquad=\exp\biggl(-\frac{1}{2 \tilde\sigma^2} (x-\tilde u)^2 \biggr) \exp
\biggl( -\frac{1}{2}\bigl(1+\rho^2-\tilde\sigma^2
\bigr)\frac{\tilde
u^2}{\tilde
\sigma^4} \biggr)\qquad \mbox{by $\tilde u := \tilde\sigma^2
u$}
\\
&&\qquad= (2\uppi\tilde\sigma) \phi_{\tilde\sigma^2}(x-\tilde u) \phi\biggl(
\frac{ \sqrt{1+\rho^2-\tilde\sigma^2}}{\tilde\sigma^2} \tilde u \biggr),
\end{eqnarray*}
where the positive value for $(1+\rho^2-\tilde\sigma^2)$ is guaranteed
by the condition $\rho^2 \geq1/\sigma^2+\gamma^2/2 > 1/(1+2\sigma^2)
:= 1-\tilde\sigma^2$.
By change of variables,
\[
\frac{[\phi\star v_k(u)](x)}{\sqrt{\phi_{1+\sigma^2}(x)}} 
= \biggl(\frac{C_k}{\sqrt{k!}}
\frac{ [2\uppi(1+\sigma^2)]^{1/4}
}{\tilde
\sigma} \biggr) \phi_{\tilde\sigma^2} \star\phi\biggl(
\frac{
\sqrt{1+\rho^2-\tilde\sigma^2}}{\tilde\sigma^2} \tilde u \biggr) H_k
\biggl( \frac{\gamma}{\tilde\sigma^2}
\tilde u \biggr).
\]
%
Using the definitions of each transformed variables (\ref{tildedef}),
the proof is complete.
\end{pf*}
\end{appendix}
\section*{Acknowledgements}
This work is part of the author's Ph.D. dissertation, written at Yale University.
The author is grateful to David Pollard, Harrison Zhou, and Richard J.
Samworth for their comments and advice. The research was supported in part by NSF Career Award DMS-06-45676 and NSF FRG Grant
DMS-08-54975.

%
%

%



\printhistory

\end{document}